\documentclass[11pt]{amsart}
\usepackage{amsfonts}
\usepackage{mathrsfs}
\usepackage{amsmath}
\usepackage{amsthm}
\usepackage{tikz}






\def\av{\ensuremath{\text{Av}_{n}}}
\def\sqbox#1{\fbox{\ensuremath{#1}}}
\newtheorem{theorem}{Theorem}[section]
\newtheorem{lemma}[theorem]{Lemma}
\newtheorem{corollary}[theorem]{Corollary}
\def\inj{\hookrightarrow}
\theoremstyle{definition}
\newtheorem*{example}{Example}

\begin{document}

\title{Permutations Almost Avoiding Monotone Distant Patterns}

\author{Nicholas Van Nimwegen}
\date{}
\begin{abstract}
In \cite{bona}, Bóna and Pantone studied permutations that avoided all but one pattern of length $k$ that began with a length $k-1$ increasing subsequence. We draw the connection between that idea and distant patterns, first discussed heavily in \cite{dimitrov}, and study similar permutation classes, where the index not part of the increasing subsequence can vary. We find a large class of Wilf-Equivalences between $k+1$ classes of $k$ patterns of length $k+1$, and outline several classes of unbalanced Wilf-Equivalences related to the first class. Using this, we are also find new bounds on the exponential growth rate on all monotone distant patterns with a single gap constraint.
\end{abstract}
\maketitle

\section{Introduction}
The study of permutation patterns has been a major area of research in combinatorics for the past 50 years. In that time, several varieties of permutation patterns have been introduced. Of particular interest to us is the concept of distant patterns, or patterns with constraints placed on gap sizes between entries. These were first systematically discussed by Dimitrov in \cite{dimitrov}, who considered all distant patterns on 2 and 3 letters, and mentioned several unproved conjectures for longer distant patterns.

A classical distant pattern can equivalently be thought of as a collection of classical patterns. Specifically, a classical distant pattern on $k$ letters with one constraint of size 1 can be thought of as a collection of $k+1$ classical patterns, all on $k+1$ letters. We introduce in this work the concept of \textit{almost distant patterns}, where one of the $k+1$ classical patterns is removed.

We consider strictly monotone patterns, and in doing so find a surprising Wilf-Equivalence between a large collection of almost distant patterns. We use this method to place an upper bound on the growth rate of all monotone distant patterns, which we believe to be better than any existing bounds. In addition, we find an infinite class of unbalanced Wilf-Equivalences relating to almost distant patterns, which are always of interest given the relative rarity of such equivalences. We conclude with several open questions.

It should be mentioned that this is not the first paper to consider almost distant patterns, although it is the first to give them that name. In \cite{bona}, Bóna and Pantone considered collections of permutations that are exactly almost distant patterns with constrained gap following the pattern, and their results on growth rates are the way that we provide bounds on distant patterns in general.

\subsection{Definitions}
All permutations in this paper will be presented in one-line notation. Let $q = q_{1}\dots q_{k} \in \mathfrak{S}_{k}$, $p = p_{1}\dots p_{n} \in \mathfrak{S}_{n}$. We say $p$ \textit{avoids} $q$ if there is no subsequence $p_{i_{1}}\dots p_{i_{k}}$ with $i_{1} < i_{2} < \dots < i_{k}$ such that $q_{a} < q_{b} \iff p_{i_{a}} < p_{i_{b}}$. Otherwise, we say $p$ \textit{contains} $q$. If $p$ avoids $q$, we say $p \in \av(q)$.

We will use $\square$ to denote a gap between letters. Thus avoiding the pattern $q_{1}\dots q_{j-1}\square q_{j}\dots q_{k}$ is equivalent to avoiding occurrences $p_{i_{1}}\dots p_{i_{k}}$ of $q$ where $q_{j-1}$ and $q_{j}$ are not adjacent. We call a pattern with a $\square$ a \textit{distant pattern}, where the length of the distant pattern is the number of letters excluding the $\square$. So our definition would be a distant pattern of length $k$.

Note the general definition of distant patterns allows for multiple squares, and also allows for $\square^{r}$ to denote gaps of size at least $r$. Our results relate specifically to distant patterns with only a single square forbidding any gap, so our definition only considers that case.

It was shown in \cite{dimitrov} that avoiding a distant pattern as defined above  is equivalent to avoiding $k+1$ patterns of length $k+1$. In particular, if $q = q_{1}\dots q_{j-1}\square q_{j}\dots q_{k}$ is a distant pattern of length $k$, and $q' = q_{1}\dots q_{k}$ is the classical pattern obtained by ignoring the $\square$, the $k+1$ patterns in question are those where the $j^{\text{th}}$ entry ranges from $1$ to $k+1$, and the remaining entries are an occurrence of $q'$. For example, $12\square 3 = \{2314,1324,1234,1243\}$.

Our definition of almost distant patterns comes directly from this equivalence. We define $q = q_{1}\dots q_{j-1}\sqbox{i}q_{j}\dots q_{k}$ as the collection of $k+1$ patterns equivalent to $q_{1}\dots q_{j-1}\square q_{j}\dots q_{k}$, excluding the pattern with $i$ in the $j^{\text{th}}$ position. For example, $12\sqbox{2}3 = \{2314,1234,1243\}$. We call $q$ an \textit{almost distant pattern} of length $k$.

While the idea of almost distant patterns can have any classical pattern underlying them, this work relates to only almost distant monotone patterns. For that reason, we introduce notation to simplify presentation. We write $M_{k,j,i} = 12\dots (j-1)\sqbox{i}j(j+1)\dots (k-1)k$. In this manner, the permutation classes discussed in \cite{bona} as $A_{k,i}$ can be written as $M_{k-1,k,i}$.

We will also be concerned with the growth rates of almost distant patterns. Given a set of permutations $Q$, define the \textit{exponential growth rate} of the sequence $\av(Q)$ as $L(Q) = \lim_{n \rightarrow \infty} \av(Q)^{1/n}$. In \cite{bona}, Bóna and Pantone showed $L(M_{k,k+1,i}) = (k-1)^{2}$ for $2 \leq i \leq k$, and $L(M_{k,k+1,k+1}) = (k-1)^{2}+1$.

\section{Wilf-Equivalence for $i=j$}

We first introduce a small lemma that will be instrumental in our proof:

\begin{lemma}
	Let $k \in \mathbb{Z}^{+}$. Then $\av(M_{k,j,i}) = \av(M_{k,k+2-j,k+2-i})$ for all $n \in \mathbb{Z}^{+}$, $1 \leq i,j \leq k+1$.
\end{lemma}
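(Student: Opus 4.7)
The plan is to exhibit a bijection between the two avoidance sets using the reverse-complement symmetry of $\mathfrak{S}_n$. Recall that for $\sigma \in \mathfrak{S}_n$, the reverse-complement $rc(\sigma)$ is defined by $rc(\sigma)_p = n+1-\sigma_{n+1-p}$; this is an involution of $\mathfrak{S}_n$ satisfying $\sigma$ contains $\pi$ if and only if $rc(\sigma)$ contains $rc(\pi)$, and the same equivalence holds for sets of patterns. So it suffices to show that $rc$ maps the collection $M_{k,j,i}$ onto the collection $M_{k,k+2-j,k+2-i}$.

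First I would unpack the Dimitrov equivalence concretely. Each of the $k+1$ classical patterns underlying $12\cdots(j-1)\square j\cdots k$ is a permutation of $[k+1]$ with some value $v \in [k+1]$ at position $j$ and an increasing subsequence of length $k$ occupying the remaining positions $[k+1]\setminus\{j\}$; call this pattern $P_{j,v}$. Then by definition $M_{k,j,i} = \{P_{j,v} : v \neq i\}$.

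Next I would compute $rc(P_{j,v})$. Reversal sends position $j$ to position $k+2-j$ and turns the increasing subsequence on the remaining positions into a decreasing subsequence; complementation then sends value $v$ to value $k+2-v$ and turns the decreasing subsequence back into an increasing one. Hence $rc(P_{j,v}) = P_{k+2-j,\, k+2-v}$. As $v$ ranges over $[k+1]\setminus\{i\}$, the value $k+2-v$ ranges over $[k+1]\setminus\{k+2-i\}$, so
\[
rc(M_{k,j,i}) \;=\; \{P_{k+2-j,\,k+2-v} : v \neq i\} \;=\; M_{k,k+2-j,k+2-i}.
\]
Since $rc$ is an involution of $\mathfrak{S}_n$ that intertwines pattern avoidance, it restricts to a bijection $\av(M_{k,j,i}) \to \av(M_{k,k+2-j,k+2-i})$, giving the claimed equivalence.

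There is no real obstacle here; the only thing to be careful about is the index bookkeeping, namely verifying that the position map $j \mapsto k+2-j$ and the value map $i \mapsto k+2-i$ come out with the correct arithmetic (in particular that the boundary cases $j \in \{1,k+1\}$ and $i \in \{1,k+1\}$ are handled uniformly, which they are because the formulas do not require excising any position from the increasing subsequence in a special way).
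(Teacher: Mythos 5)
Your proof is correct and follows the same route as the paper: both arguments reduce the lemma to the fact that reverse-complement intertwines pattern containment and then check that $M_{k,j,i}^{rc} = M_{k,k+2-j,k+2-i}$. You simply spell out the index computation that the paper leaves to the reader.
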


\begin{proof}
Recall if $q = q_{1}\dots q_{t} \in \mathfrak{S}_{t}$, the reverse complement of $q$, denoted $q^{rc}$, is $(t+1-q_{t})(t+1-q_{t-1})\dots (t+1-q_{2})(t+1-q_{1})$. It is well known that if $P,Q$ are two sets of permutations such that $P^{rc} = Q$, then $\av(P) = \av(Q)$ for all $\mathbb{Z}^{+}$. The proof of the lemma then follows from noticing $M_{k,j,i} = M_{k,k+2-j,k+2-i}^{rc}$.
\end{proof}

With that out of the way, we are ready to present our main result:

\begin{theorem}
	Let $k \in \mathbb{Z}^{+}$, then the almost distant patterns $M_{k,i+1,i+1}$ are Wilf-Equivalent for all $0 \leq i \leq k$.
\end{theorem}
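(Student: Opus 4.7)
The plan is to establish the full chain of Wilf-equivalences $\av(M_{k,1,1}), \av(M_{k,2,2}), \ldots, \av(M_{k,k+1,k+1})$. The preceding Lemma already gives $\av(M_{k,i+1,i+1}) = \av(M_{k,k+1-i,k+1-i})$ by reverse-complement, so it suffices to prove Wilf-equivalence between $\av(M_{k,m,m})$ and $\av(M_{k,m+1,m+1})$ for $m$ ranging over roughly the first half of $\{1,\ldots,k\}$; composing these consecutive equivalences (together with the reverse-complement identifications) yields the full chain.

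The first step will be to derive a clean structural characterization of the avoidance: a permutation $p\in\mathfrak{S}_n$ lies in $\av(M_{k,m,m})$ if and only if, for every length-$k$ increasing subsequence $p_{a_1}<p_{a_2}<\cdots<p_{a_k}$ of $p$, and under the conventions $a_0=0$, $a_{k+1}=n+1$, $p_{a_0}=0$, $p_{a_{k+1}}=n+1$, every entry $p_b$ with $a_{m-1}<b<a_m$ satisfies $p_{a_{m-1}}<p_b<p_{a_m}$. This reformulates avoidance as a slot-specific constraint: the only difference between $M_{k,m,m}$-avoidance and $M_{k,m+1,m+1}$-avoidance is which of the $k+1$ possible ``gaps'' of a length-$k$ increasing subsequence is restricted.

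Given this reformulation, I would attempt to construct an explicit bijection $\phi_m\colon\av(M_{k,m,m})\to\av(M_{k,m+1,m+1})$ that shifts the restricted gap by one. The natural approach is to identify the entries in $p$ that witness the slot-$m$ constraint and remap them (together with their values) in a canonical local way so that the image witnesses a slot-$(m+1)$ constraint instead; correctness would then follow from a case analysis on how length-$k$ increasing subsequences interact with the transformed entries.

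The principal obstacle is globalizing this local transformation into a genuine bijection. A single entry $p_b$ may simultaneously sit in the slot-$m$ gap of many distinct length-$k$ increasing subsequences of $p$, and the modifications required by those subsequences could conflict. If a direct bijection proves too delicate, a reasonable fallback is an enumerative route: decompose avoiders according to the position of the entry with value $n$ and derive a functional equation for $\sum_n|\av(M_{k,m,m})|x^n$ whose form is manifestly independent of $m$, which would yield the Wilf-equivalence without producing an explicit bijection.
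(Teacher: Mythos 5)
Your setup is sound: the reverse--complement lemma, the reduction to equivalences between consecutive classes $\av(M_{k,m,m})$ and $\av(M_{k,m+1,m+1})$, and the structural reformulation of avoidance (every entry lying positionally in the $m$-th gap of a length-$k$ increasing subsequence must also lie there in value) are all correct and match the paper's framing. But the proof has a genuine gap exactly where you flag it: you never construct the map $\phi_m$, and the difficulty you name --- that one entry can sit in the slot-$m$ gap of many increasing subsequences at once --- is precisely the problem the paper's construction is designed to solve. Saying the map should ``remap the witnessing entries in a canonical local way'' is a statement of intent, not a construction, and the generating-function fallback is likewise only a hope; no functional equation is derived. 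As it stands, nothing in the proposal establishes the equivalence for any single consecutive pair.

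For comparison, the paper resolves the conflict problem by working with \emph{global} roles rather than individual subsequences: it lets $B$ be the set of entries that can act as an $(i+1)$ in \emph{some} $12\dots k$ pattern, proves that the avoidance condition forces each $b \in B$ to have a unique anchor $a \in A$ (an entry acting as an $i$ with it) and forces $B$ to decompose into contiguous increasing runs following these anchors, and then slides each $b \in B$ rightward to just before $f(b)$, the rightmost element of $C$ exceeding $b$. Injectivity follows because the preimage is recoverable (each moved entry returns to just after its unique anchor), and --- a trick worth noting --- the paper only needs \emph{injections} at each step: composing them around the full cycle $M_{k,1,1} \inj M_{k,2,2} \inj \cdots \inj M_{k,k+1,k+1} \inj M_{k,1,1}$ gives an injection of a finite set into itself, forcing every map in the chain to be a bijection. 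Your plan, which closes the chain using only half the steps plus reverse complement, would instead require honest bijections at each step, which is a strictly harder target. To complete your argument you would need to supply a concrete map together with proofs that it lands in $\av(M_{k,m+1,m+1})$ and is invertible.
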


\begin{proof}
First note that by lemma, $M_{k,1,1} = M_{k,k+1,k+1}$. Our proof is as follows: we will construct an injection $F$ from $M_{k,i+1,i+1} \inj M_{k,i+2,i+2}$ for $0 \leq i \leq k-1$. We will then have a string of injections $M_{k,1,1} \inj M_{k,2,2} \inj \dots \inj M_{k,k+1,k+1} \inj M_{k,1,1}$, where the final map is the reverse complement map. We can then compose these maps to get an injection $M_{k,i+1,i+1} \inj M_{k,i+1,i+1}$ for $0 \leq i \leq k$, and thus each map must be a bijection.

Let $p \in M_{k,i+1,i+1}$. We define 3 sets, $A,B,C$. The set $B$ will be all elements in $p$ that can act as an $i+1$ in a $12\dots k$ pattern. The set $A$ will be all elements in $p$ that can act as an $i$ in a $12\dots k$ pattern, that are not already in $B$. Finally, the set $C$ will be the set of elements of $p$ that can act as an $i+2$ in a $12\dots k$ pattern, that are not already elements of $B$.

\begin{example}
Consider the permutation $p = 82456173 \in Av_{8}(M_{4,3,3})$. Here $A = \{4\}, B = \{5\}, C = \{6\}$.
\end{example}

This definition of sets does raise the question of several edge cases. In particular, it raises the question of how we define $A$ for $i = 0$, and $C$ for $i = k-1$. For $i=0$, we set $A$ to the start of the permutation, and for $i=k-1$ we set $C$ to the end of the permutation. The way that this affects the following facts and the map itself will be addressed when relevant.

Before we define the map, we should establish some facts about our set $B$. First, each element $b \in B$ cannot have multiple $a \in A$ such that $a,b$ can act as an $i,i+1$ together in a $12\dots k$ pattern. By way of contradiction assume there are two such elements, $a,a'$, with $a < a'$ without loss of generality. Then either $a$ comes before $a'$, in which case $a'$ could be an $i+1$ (and thus be in $B$), or $a$ comes after $a'$. In that case, the $12 \dots k$ pattern with $a',b$ together with $a$ would be one of our forbidden patterns.

For the edge case $i=0$, we obviously still get that there are not multiple elements of $A$ associated to a given element of $B$.

Next let $b \in B$, and $a \in A$ the unique element of $A$ that acts with it in a $12\dots k$ pattern. Then the subsequence from $a$ to $b$ must be an increasing pattern by our almost distant pattern condition. Further, every element in that subsequence (except $a$ itself) must be an element of $B$, since we can take the $12\dots k$ pattern containing $a,b$, and replace $b$ with any element between $a$ and $b$, since by our forbidden patterns these elements must also have value between $a$ and $b$. This means that every element of $B$ is part of a connected increasing subsequence following an element of $A$.

For the edge case $i=0$, this means that all elements of $B$ occur in an increasing subsequence at the start of the permutation.

With these facts established, we can get into our map. For each element $b \in B$, we define $f(b)$ as the farthest right element of $C$ that is larger than $b$. Our map $F$ takes each element of $B$ to directly before $f(b)$. If multiple elements of $B$ have the same image under $f$, they are placed in increasing order. Note this map does not move any elements not in $B$ relative to each other, and does not move any elements of $B$ backwards relative to the rest of the permutation.

For the edge case $i=k-1$, all elements of $B$ move to the end of the permutation and are placed in increasing order.

\begin{example}
Consider the permutation $p = 8\;3\;2\;11\;12\;5\;6\;9\;10\;14\;4\;1\;13\;7 \in Av_{14}(M_{4,3,3})$.  Then $A \{5,11\}, B = \{6,9,10,12\}, C = \{7,13,14\}$. Then $f(6) = 7$, and $f(8) = f(10) = f(12) = 13$. We show $p$ and $F(p)$ in Figure 1 below, with $A,B,C$ colored in red, blue, and green respectively.
\end{example}

\begin{center}
\begin{tikzpicture}[scale = 0.4]

\draw [help lines] (0,0) grid (13,13);
\node [circle,fill,scale=0.5] at (0,7) {};
\node [circle,fill,scale=0.5] at (1,2) {};
\node [circle,fill,scale=0.5] at (2,1) {};
\node [circle,fill,scale=0.5,red] at (3,10) {};
\node [circle,fill,scale=0.5,blue] at (4,11) {};
\node [circle,fill,scale=0.5,red] at (5,4) {};
\node [circle,fill,scale=0.5,blue] at (6,5) {};
\node [circle,fill,scale=0.5,blue] at (7,8) {};
\node [circle,fill,scale=0.5,blue] at (8,9) {};
\node [circle,fill,scale=0.5,green] at (9,13) {};
\node [circle,fill,scale=0.5] at (10,3) {};
\node [circle,fill,scale=0.5] at (11,0) {};
\node [circle,fill,scale=0.5,green] at (12,12) {};
\node [circle,fill,scale=0.5,green] at (13,6) {};

\node at (14.5,7) {$\mapsto$};

\draw [help lines] (16,0) grid (29,13);
\node [circle,fill,scale=0.5] at (16,7) {};
\node [circle,fill,scale=0.5] at (17,2) {};
\node [circle,fill,scale=0.5] at (18,1) {};
\node [circle,fill,scale=0.5,red] at (19,10) {};
\node [circle,fill,scale=0.5,red] at (20,4) {};
\node [circle,fill,scale=0.5,green] at (21,13) {};
\node [circle,fill,scale=0.5] at (22,3) {};
\node [circle,fill,scale=0.5] at (23,0) {};
\node [circle,fill,scale=0.5,blue] at (24,8) {};
\node [circle,fill,scale=0.5,blue] at (25,9) {};
\node [circle,fill,scale=0.5,blue] at (26,11) {};
\node [circle,fill,scale=0.5,green] at (27,12) {};
\node [circle,fill,scale=0.5,blue] at (28,5) {};
\node [circle,fill,scale=0.5,green] at (29,6) {};

\end{tikzpicture}
\end{center}


To show that this is an injective map into $M_{k,i+2,i+2}$, we need to establish first that $F(p) \in M_{k,i+2,i+2}$, and then show the map is injective.

To show our map does go into $M_{k,i+2,i+2}$, first we prove the following lemma:

\begin{lemma}
Let $p \in \av(M_{k,i+1,i+1})$, and $F$ be as described above. Then for all elements $x$ in $F(p)$, $x$ can act as an $i+1$ in a $12\dots k$ pattern in $F(p)$ if and only if $x \in B$ in $p$.
\end{lemma}

\begin{proof}
First let $x \in B$ in $p$. Then there is a $12\dots k$ pattern in $p$ with $i \in A, (i+1) = x, (i+2) = f(x)$. Then the only element in this pattern that is moved under $F$ is $x$, and $x$ is still before $f(x)$ in $F(p)$. Thus this $12\dots k$ pattern still exists in $F(p)$, so $x$ is an $(i+1)$ in a $12\dots k$ pattern in $F(p)$.

Now let $x$ be an $(i+1)$ in a $12\dots k$ pattern in $F(p)$, then we can find a $12 \dots k$ pattern with $i \in A, (i+1) = x, (i+2) \in C$. But then $x$ must have been an $(i+1)$ in $p$, and thus $x \in B$.
\end{proof}

Since we know we haven't created any new elements that can be $(i+1)$s in $F(p)$, we can show we fulfill our almost distant pattern condition. Define $B'$ as all elements of $F(p)$ that can be an $(i+1)$ in a $12\dots k$ pattern in $F(p)$, and let $b \in B'$. By our map, the only element in $F(p)$ that can act as an $(i+2)$ with $b$ that is not an element of $B'$ is $f(b)$, and every element between $b$ and $f(b)$ has value between $b$ and $f(b)$. Thus, we do not contain any forbidden patterns, and so $F(p) \in M_{k,i+2,i+2}$.

Now we show this map is an injection. Let $F(p) \in M_{k,i+2,i+2}$, we show we can uniquely recover $p$. All elements not in $B'$ are not moved by $F$, and thus their relative position can be recovered. For all elements $b \in B'$, we know that in $p$ there was only one element $a \in A$ smaller than $b$ that was before $b$, and the only elements between $a$ and $b$ are other elements of $B'$. So we can recover the preimage by moving all elements of $B'$ to directly after their associated element of $A$, placing them in increasing order if multiple elements of $B'$ go to the same element of $A$.

For the edge case $i=0$, all elements of $B'$ are moved to the beginning of the permutation in increasing order.

Since we can uniquely recover the preimage of any $F(p)$, we conclude $F$ is an injection.

As discussed above, if we have an injection forward for all $0 \leq i \leq k-1$, we can conclude $M_{k,i+1,i+1}$ is Wilf-Equivalent for all $0 \leq i \leq k$.

\end{proof}

From \cite{bona}, we know the exponential growth rate of $M_{k,k+1,k+1}$ is $(k-1)^{2}+1$, and thus we get the interesting corollary as a result.

\begin{corollary}

Let $k \in \mathbb{Z}^{+}$. Then for all $1 \leq i \leq k-1$, $L(1\dots i\square (i+1)\dots k) \in [(k-1)^{2},(k-1)^{2}+1]$.

\end{corollary}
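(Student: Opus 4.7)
The plan is to sandwich $L(1\dots i\square(i+1)\dots k)$ between the growth rates of two well-understood avoidance classes by means of two inclusions of permutation classes. Recall that $L(Q) \le L(Q')$ whenever $\mathrm{Av}_n(Q) \subseteq \mathrm{Av}_n(Q')$ for all $n$, so I just need the right inclusions in the right directions.

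For the upper bound, I would use the observation that the classical distant pattern $1\dots i\square(i+1)\dots k$ corresponds, by the equivalence from \cite{dimitrov} recalled in the definitions, to the set of $k+1$ classical patterns of length $k+1$ whose entries outside position $i+1$ form $12\dots k$. The almost distant pattern $M_{k,i+1,i+1}$ is exactly this set with the single pattern $12\dots k(k+1)$ removed. Hence avoiding the distant pattern forces avoiding strictly more patterns, giving
\[
\mathrm{Av}_n(1\dots i\square(i+1)\dots k) \subseteq \mathrm{Av}_n(M_{k,i+1,i+1}).
\]
By the main theorem of this section combined with Bóna--Pantone's result $L(M_{k,k+1,k+1}) = (k-1)^2+1$, the right-hand class has growth rate $(k-1)^2+1$, yielding the upper bound.

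For the lower bound, I would observe that any occurrence of the distant pattern $1\dots i\square(i+1)\dots k$ is in particular an occurrence of the classical pattern $12\dots k$. Consequently a permutation that avoids $12\dots k$ classically certainly avoids the distant version, so
\[
\mathrm{Av}_n(12\dots k) \subseteq \mathrm{Av}_n(1\dots i\square(i+1)\dots k).
\]
By the classical Regev formula $L(12\dots k) = (k-1)^2$, this gives the matching lower bound.

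Neither direction looks technically hard; the only subtlety is making sure the indexing of the distant-to-classical equivalence lines up with $M_{k,i+1,i+1}$ and not some neighboring $M_{k,j,i}$, which is why I would spell out the correspondence explicitly in the first paragraph before invoking the theorem. The restriction $1 \le i \le k-1$ in the statement is exactly what is needed so that the distant pattern has nonempty monotone blocks on both sides of the $\square$, which is implicit in both inclusions.
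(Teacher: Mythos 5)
Your proposal is correct and follows essentially the same route as the paper: the lower bound via $\mathrm{Av}_n(12\dots k) \subseteq \mathrm{Av}_n(1\dots i\,\square\,(i+1)\dots k)$ with Regev's formula, and the upper bound via the inclusion of avoidance classes induced by $M_{k,i+1,i+1}$ being the distant pattern's set of $k+1$ classical patterns minus $12\dots(k+1)$, combined with Theorem 2.2 and the B\'ona--Pantone value $(k-1)^2+1$. No substantive differences.
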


\begin{proof}

Let $D_{i} = 1\dots i\square (i+1)\dots k$. We can place a lower bound on the growth rate of $\av(D_{i})$ by observing that $p \in \av(12\dots k) \implies p \in \av(D_{i})$. The exponential growth rate of $\av(12\dots k)$ is known to be $(k-1)^{2}$ \cite{regev}, and thus we have a lower bound for $L(D_{i})$.

For our upper bound, we use the result of Theorem 2.2. Since $M_{k,i+1,i+1} \subseteq D_{i}$, we know $L(M_{k,i+1,i+1}) \geq L(D_{i})$, since a permutation avoiding all the patterns of $D_{i}$ then also avoids all the patterns of $M_{k,i+1,i+1}$.

We know from Theorem 2.2 $L(M_{k,i+1,i+1}) = L(M_{k,k+1,k+1})$, and it was proved in \cite{bona} that $L(M_{k,k+1,k+1}) = (k-1)^{2}+1$. Thus, $L(D_{i}) \leq (k+1)^{2}+1$.

\end{proof}

Technically this proof also suffices for $i = 0,k+1$, but it is already known that the exponential growth rate of those classes is $(k-1)^{2}$, so the corollary would not be providing any new information in those cases.

To the best of our knowledge, the previously best known bound on these classes was $k^{2}$, so this is a vast improvement from known results.

\section{Unbalanced Equivalences for $i = j \pm 1$}

In addition to showing that $M_{k,j,j}$ are Wilf-Equivalent for all $1 \leq j \leq k+1$, we can also show two more Wilf-Equivalences to $M_{k,j,j}$.

\begin{theorem}

Let $k \in \mathbb{Z}^{+}$. Then the sets $M_{k,2,2}, M_{k,2,1}, M_{k,k,k+1}$ are Wilf-Equivalent.

\end{theorem}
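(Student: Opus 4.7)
Since $M_{k,k,k+1}^{rc}=M_{k,2,1}$, the lemma immediately supplies $\av(M_{k,k,k+1})\equiv\av(M_{k,2,1})$. The substantive task is $\av(M_{k,2,2})\equiv\av(M_{k,2,1})$, and my plan is to produce an explicit bijection in the spirit of Theorem~2.2.

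First I would make precise the local difference between the two classes. Both avoid the $k-1$ common patterns $\{1m2\cdots(m-1)(m+1)\cdots(k+1):m=3,\ldots,k+1\}$ and differ only in one excluded classical pattern: the identity $12\cdots(k+1)$ for $M_{k,2,2}$ versus $213\cdots(k+1)$ for $M_{k,2,1}$. Once the common patterns are forbidden, $\av(M_{k,2,2})$ is characterized by: for every length-$k$ increasing subsequence $p_{i_1}<\cdots<p_{i_k}$ and every intermediate position $i'\in(i_1,i_2)$, the entry $p_{i'}$ lies in $(p_{i_1},p_{i_2})$; and $\av(M_{k,2,1})$ is the same but with $p_{i'}<p_{i_1}$.

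Next I would adopt the setup of Theorem~2.2 with $i=1$: let $B$ be the set of entries of $p$ that can act as a ``2'' in a length-$k$ increasing pattern and $A$ the entries that can act as a ``1'' but lie outside $B$. The partner-uniqueness and chain-structure facts proved inside Theorem~2.2 carry over: each $b\in B$ has a unique partner $a(b)\in A$, and the positionally-between entries all lie in $B$ and, together with $a$, form an increasing chain $a<m_1<\cdots<m_r<b_{\max}$ occupying consecutive positions. The bijection $F$ I propose rearranges each such maximal chain by reversing the initial block $a,m_1,\ldots,m_r$ in place, so that the chain becomes $m_r,\ldots,m_1,a,b_{\max}$, leaving $b_{\max}$ and all non-chain entries fixed. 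In the image, every length-$k$ increasing subsequence whose first two entries lie in the chain has second entry equal to $b_{\max}$, with all in-between entries strictly smaller than the first, so the $\av(M_{k,2,1})$ middles condition holds by construction; one can verify directly that this works for $k=2$.

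The main obstacle is twofold. First, showing $F(p)\in\av(M_{k,2,1})$ requires that no forbidden common pattern is created by the reversal and, crucially for $k\geq 3$, that the map also dismantles the length-$(k+1)$ increasing subsequences $a,b_{\max},c_1,\ldots,c_{k-1}$ extending a chain to its right — such subsequences are allowed under $\av(M_{k,2,2})$ but forbidden under $\av(M_{k,2,1})$ — which may force the reversal to absorb the $c_j$'s as well. Second, invertibility follows from an analogue of the lemma inside Theorem~2.2 asserting that the set $B$ is preserved by $F$: once chains in $F(p)$ are re-identified as the maximal decreasing blocks ending at a local minimum $a$ immediately followed by $b_{\max}$, the reversal is undone deterministically. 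Careful bookkeeping for overlapping chains sharing an $A$-partner and for chains abutting the boundary of the permutation is what remains.
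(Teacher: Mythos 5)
Your reduction via reverse complement, your characterization of the two classes, and the map you propose are all essentially the paper's: it too reverses, for each entry $a$ that can act as a ``2'' but not a ``1'' in an increasing $k$-pattern, the block of all entries that can act as a ``1'' with $a$ (your chain $a,m_1,\ldots,m_r$ with $b_{\max}$ held fixed is the paper's block $[g(a),a)$, just indexed from the other end). The difficulty is that you stop exactly where the proof has to happen: both of your ``main obstacles'' --- that $F(p)\in\av(M_{k,2,1})$ and that $F$ is invertible --- are left as remaining work, and you even float the possibility that the map must be redefined (``may force the reversal to absorb the $c_j$'s as well''). As written this is a plan for a proof rather than a proof.

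The obstacle you single out is in fact vacuous, and seeing why is the one structural observation you are missing. Suppose $a,b_{\max},c_1,\ldots,c_{k-1}$ were increasing in $p$ with the $c_j$ to the right of the chain. Then $b_{\max},c_1,\ldots,c_{k-1}$ is an increasing subsequence of length $k$, so $c_1$ can act as a ``2''; since $a,c_1,c_2,\ldots,c_{k-1}$ is also increasing of length $k$ and $a$ is, by the uniqueness fact from Theorem 2.2, the unique $A$-partner of $c_1$, the entry $c_1$ already belongs to the chain of $a$ --- contradicting the maximality of $b_{\max}$. The chains therefore absorb every rightward increasing extension automatically, no $c_j$ needs to be ``absorbed'' by hand, and the reversal as defined already destroys every increasing $(k+1)$-subsequence whose first two entries sit in a chain. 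What still has to be written out (and what the paper does, tersely) is that no occurrence of the common forbidden patterns (those whose second entry has rank at least $3$) is created by the reversal, together with the recovery argument: in $F(p)$ the moved blocks are exactly the maximal decreasing runs of entries that can act as a ``1'', so re-reversing them returns $p$ uniquely.
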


\begin{proof}

First note $M_{k,2,1}$ is Wilf-Equivalent to $M_{k,k,k+1}$ by reverse-complement, so proving the first equality is sufficient to prove the theorem.

We construct an explicit bijection $G$ between $\av(M_{k,2,2})$ and $\av(M_{k,2,1})$. Let $p \in \av(M_{k,2,2})$. Let $A$ be the elements of $p$ that can can act as a $2$ in a $12\dots k$ pattern in $p$, but not a $1$. For each $a \in A$, let $g(a)$ be the leftmost element of $p$ that can act as a $1$ with $a$ in a $12\dots k$ pattern. Then for all elements $t$ between $g(a)$ and $a$, we know $g(a) < t < a$ by our almost distant pattern condition. Thus, the range $[g(a),a)$ is exactly the set of elements that can act as a $1$ in a $12\dots k$ pattern, with $g(a)$ acting as a $2$. Our map reverses the subsequences $[g(a),a)$ for each $a \in A$.

The range $[g(a),a)$ is increasing for each $a \in A$, and thus by reversing the subsequence, we've ensured that between every pair of elements that can act as a $1,2$ in a $12\dots k$ pattern, there are only elements smaller than the $1$. Thus we map into our desired set. The inverse map is defined in the exact same manner.

To show this is a bijection, we show each map is an injection. Let $G(p) \in \av(M_{k,2,1})$, we show we can uniquely recover $p$. All elements that cannot act as a $1$ in a $12\dots k$ pattern do not move under $G$, so we can recover their values and position in $p$ from just looking at $G(p)$. Similarly, the reversed subsequences can be uniquely determined from $G(p)$ by the decreasing adjacent sequences of elements that can act as a $1$. Thus, we can uniquely recover $p$ from $G(p)$, and so we have an injection. The process is similar for recovering the preimage of a $G^{-1}(p) \in \av(M_{k,2,2})$, with the only difference being the adjacent sequences of elements that can act as $k$ are now increasing instead of decreasing.

Since the two sets have an explicit bijection, we conclude the patterns are Wilf-Equivalent.

\end{proof}

The key detail for this proof is the elements that can act as a 1 with a given 2 form an increasing subsequence of adjacent elements in $M_{k,2,2}$, and a decreasing subsequence of adjacent elements in $M_{k,2,1}$. A natural question to ask would then be if other almost distant patterns exhibiting the same conditions would also be Wilf-Equivalent. For any given $j$, $M_{k,j,j\pm 1}$ forces decreasing subsequences of elements that act as a $j$ (for $i=j-1$) or $j+1$ (for $i=j+1$).

The answer to this question is unfortunately no. If $G'$ is the adjusted version of the map in Theorem 3.1, we get that $G'$ does not always map into the desired set. For example, $312456 \in M_{4,3,3}$ would map to $314256$, which contains $34256 \sim 23145$ as a pattern, which is forbidden in $M_{4,3,2}$. However, the inverse map does always map into the desired set, which we can prove.

\begin{theorem}

Let $k \in \mathbb{Z}^{+}$, $2 \leq j \leq k$. Then $\av(M_{k,j,j}) \geq \av(M_{k,j,j\pm 1})$.

\end{theorem}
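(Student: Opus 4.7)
By Lemma~2.1, $\av(M_{k,j,j+1}) = \av(M_{k,k+2-j,k+1-j})$ and $\av(M_{k,j,j}) = \av(M_{k,k+2-j,k+2-j})$, so setting $j' = k+2-j \in \{2,\ldots,k\}$ converts the $i = j+1$ case into the $i = j'-1$ case. Hence it suffices to construct an injection $F : \av(M_{k,j,j-1}) \hookrightarrow \av(M_{k,j,j})$ for $2 \leq j \leq k$, which is the reverse direction of the failed map $G'$ discussed before the theorem.

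For $p \in \av(M_{k,j,j-1})$, let $A$ be the set of elements of $p$ acting as a $j$ in some $12\cdots k$ pattern but not as a $j-1$ in any. For each $a \in A$, let $g(a)$ denote the leftmost element of $p$ acting as a $j-1$ with $a$ as the $j$, and let $t_1, \ldots, t_m$ be the elements strictly between $g(a)$ and $a$ in position, in positional order. The key structural claim is that $g(a) > t_1 > t_2 > \cdots > t_m$. Applying the rank-$(j-1)$ middle condition of $\av(M_{k,j,j-1})$ to the occurrence $(a_1,\ldots,a_{j-2},g(a),a,a_{j+1},\ldots,a_k)$ with middle $t_r$ forces $a_{j-2} < t_r < g(a)$, so each $t_r$ is itself a valid $(j-1)$-th for $a$ with the same supporting $(a_1,\ldots,a_{j-2})$. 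A pair-swap argument then gives the decreasing property: if $t_r < t_s$ for some $r < s$, the occurrence using $t_r$ as the $(j-1)$-th element has $t_s$ appearing as a non-adjacent middle of rank $j$ in the $(k+1)$-set, which is forbidden in $M_{k,j,j-1}$. Define $F(p)$ by reversing each subsequence $[g(a), a)$ in positions; post-reversal each becomes strictly increasing $t_m < \cdots < t_1 < g(a)$, ending just before $a$.

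To verify $F(p) \in \av(M_{k,j,j})$, take any $12\cdots k$ occurrence in $F(p)$ with non-adjacent $(j-1)$-th and $j$-th entries. The principal case has the $j$-th equal to some $a \in A$ and the $(j-1)$-th equal to some $t_l$ in the reversed range of $a$; then the non-adjacent middles between them are exactly $t_{l-1}, \ldots, t_1, g(a)$, and each strictly lies between $t_l$ and $a$ in value, producing rank $j$, as required. The remaining cases---the $(j-1)$-th lies outside every reversed range, or the $j$-th is not in $A$---must be reduced to the principal case by exploiting the leftmost choice of $g(a)$ (any candidate $(j-1)$-th element positioned before $p(g(a))$ cannot serve for $a$) and the fact that an $a^* \notin A$ also acts as a $j-1$ in some occurrence of $p$, forcing appropriate adjacency or middle-value alignment. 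Injectivity follows because in $F(p)$ each reversed range appears as a maximal strictly increasing run $t_m < \cdots < t_1 < g(a)$ sitting immediately before an element of the analog of $A$ in $F(p)$, giving a canonical inverse.

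The main obstacle is the verification that $F(p) \in \av(M_{k,j,j})$ in the non-principal cases. The forward direction (the adapted $G'$) fails genuinely---the author's example $312456 \mapsto 314256$ exhibits a rank-$(j-2)$ middle---because reversing an increasing run in $\av(M_{k,j,j})$ can drop a middle's rank from $j$ to as low as $1$. The inverse direction avoids this pathology structurally: reversing a decreasing run of $t$'s, all with values strictly below $g(a)$, only ever raises a middle's rank from $j-1$ to $j$, and the leftmost choice of $g(a)$ prevents occurrences whose $(j-1)$-th lies before $p(g(a))$ from producing unintended middles. Making this precise when a $12\cdots k$ occurrence of $F(p)$ has its $j$-th element outside $A$ is the delicate part, but the rank bookkeeping via the structural claim is what carries the argument through.
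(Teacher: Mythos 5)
Your proposal is essentially the paper's own proof: the same reverse-complement reduction of the $i=j+1$ case to $i=j-1$, the same map reversing the decreasing runs $[g(a),a)$ of potential $(j-1)$'s preceding each element that acts as a $j$, and the same recovery of the preimage from the reversed runs. You are in fact more explicit than the paper about why those runs are decreasing and about the delicate non-principal cases in verifying $F(p)\in\av(M_{k,j,j})$, which the paper simply asserts.
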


\begin{proof}

As alluded to in the paragraph above, we prove this by constructing an injection $\av(M_{k,j,j-1}) \hookrightarrow \av(M_{k,j,j})$. Let $p \in \av(M_{k,j,j-1})$. Let $A$ be the set of all elements of $p$ that can act as a $j$ in a $12\dots k$ pattern. For $a \in A$, let $h(a)$ be the leftmost element of $p$ that can act as a $j-1$ with $j$ in a $12\dots k$ pattern. Then $H$ reverses the subsequence $[h(a),a)$ for each $a \in A$.

By our almost distant pattern condition, we know that the subsequence $[h(a),a)$ is decreasing, and every element can act as a $j-1$ with $a$ in a $12\dots k$ pattern. Thus, $H(p) \in M_{k,j,j}$. The proof of injectivity is the same as in Theorem 3.1, by observing that we can identify the reversed subsequences from $H(p)$.

For $M_{k,j,j+1}$, note by reverse complement this set is Wilf-Equivalent to $M_{k,k+2-j,k+1-j}$. Thus we can conclude via the above paragraphs, $M_{k,j,j+1} \leq M_{k,k+2-j,k+2-j} = M_{k,j,j}$.

\end{proof}






While we don't get a Wilf-Equivalence between $M_{k,j,j}$ and $M_{k,j,j\pm 1}$, we are able to get an unbalanced Wilf-Equivalence that looks similar.

\begin{theorem}

Let $k \geq 2$, $2 \leq j \leq k+1$. Then there exists a finite set of permutations $S$ such that $M_{k,j,j} \subseteq S$ and $\av(M_{k,j,j-1}) = \av(S)$.

\end{theorem}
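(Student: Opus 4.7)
The plan is to prove Theorem 3.3 by characterizing the image of the injection $H: \av(M_{k,j,j-1}) \hookrightarrow \av(M_{k,j,j})$ from Theorem 3.2 as $\av(S)$ for a carefully-constructed finite $S \supseteq M_{k,j,j}$. Since $H$ is an injection, this identification would immediately give $|\av_n(S)| = |\av_n(M_{k,j,j-1})|$ for all $n$, establishing the unbalanced Wilf-equivalence.

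First I would define a candidate inverse $G'$ on $\av(M_{k,j,j})$, mimicking $H$: for each element $a$ acting as $j$ in a $12\cdots k$ pattern of $q$, find the leftmost element $g'(a)$ acting as $j-1$ with $a$, and reverse the subsequence $[g'(a), a)$ (resolving any overlaps by taking the outermost reversal). Then $q \in \mathrm{Image}(H)$ if and only if $G'(q) \in \av(M_{k,j,j-1})$. Now $M_{k,j,j}$ and $M_{k,j,j-1}$ share $k-1$ patterns and differ only in that the former contains $P_1 = 12\cdots(j-2)\,j\,(j-1)\,(j+1)\cdots(k+1)$ while the latter contains $P_2 = 12\cdots(k+1)$. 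Assuming that a symmetric adaptation of Theorem 3.2's argument shows $G'$ preserves avoidance of the $k-1$ shared patterns, the condition reduces to $G'(q) \in \av(P_2)$, i.e., $G'(q)$ avoids the monotone pattern of length $k+1$.

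The central step is to translate ``$G'(q)$ contains $P_2$'' into the presence of a bounded-size pattern in $q$. An increasing subsequence of length $k+1$ in $G'(q)$ can include at most one element from each reversed block, since each such block becomes a decreasing run in $G'(q)$; consequently at most $k+1$ blocks participate. Each such block is certified in $q$ by a single $12\cdots k$ pattern of size $k$ witnessing that its right endpoint acts as $j$. Hence any obstruction to $q \in \mathrm{Image}(H)$ is encoded in a substructure of $q$ of cardinality at most $O(k^2)$, so only finitely many minimal obstruction patterns arise up to pattern equivalence. Let $T$ denote this finite set and set $S = M_{k,j,j} \cup T$; then $q \in \av(S)$ iff $q \in \mathrm{Image}(H)$, and composing with $H$ gives the desired count equality.

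The principal obstacle is making the finiteness argument rigorous. One must handle overlapping reversal blocks (when multiple $a$'s share a common $g'(a)$), and the ``leftmost'' requirement for $g'(a)$ is a non-existence condition about earlier elements that needs to be encoded via local witnesses or sidestepped by a more clever choice of $T$. A secondary technical point is verifying rigorously that $G'$ preserves avoidance of the $k-1$ shared patterns of $M_{k,j,j} \cap M_{k,j,j-1}$, which I expect to follow by mirroring the almost-distant-pattern analysis underlying Theorem 3.2.
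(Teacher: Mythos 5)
There is a genuine gap, and it sits exactly at the step you label as an assumption. It is \emph{not} true that the inverse-style map $G'$ (undoing the block reversals) preserves avoidance of the $k-1$ patterns shared by $M_{k,j,j}$ and $M_{k,j,j-1}$; consequently the membership test for the image of $H$ does \emph{not} reduce to ``$G'(q)$ avoids the monotone pattern $12\cdots(k+1)$.'' The paper exhibits a concrete counterexample for $k=4$, $j=3$: the permutation $312456 \in \av(M_{4,3,3})$ maps under $G'$ to $314256$, which contains $23145$ --- and $23145$ is one of the \emph{shared} patterns, lying in both $M_{4,3,3}$ and $M_{4,3,2}$. Meanwhile $314256$ contains no increasing subsequence of length $5$, so your criterion would certify $312456$ as being in the image of $H$ when in fact it is not. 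Your set $T$ therefore collects the wrong obstructions: $\av(M_{k,j,j}\cup T)$ would strictly contain the image of $H$ (it would contain $312456$ in this case), and the counting identity $|\av(S)| = |\av(M_{k,j,j-1})|$ would fail. Indeed, the paper's Corollary for $j=3$ shows that the correct extra forbidden pattern is precisely $312\cdots(k+1)(k+2)$, i.e., $312456$ itself when $k=4$ --- a permutation your construction never flags.

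The correct obstruction, which is the real content of the theorem, is that $G'$ can create an occurrence of a shared pattern whenever $q$ contains a pattern of $M_{j-2,j-1,j-1}\oplus 12\cdots(k+2-j)$ whose $(j-1)^{\text{st}}$ entry has rank at least $j-1$; the paper takes $S$ to be $M_{k,j,j}$ together with the minimal permutations realizing this configuration, and then verifies both that $H$ lands in $\av(S)$ and that $G'$ carries $\av(S)$ into $\av(M_{k,j,j-1})$. Your overall architecture (characterize the image of the injection $H$ as an avoidance class of a finite set by bounding the size of minimal obstructions) matches the paper's, and your observation that an increasing subsequence meets each reversed block at most once is sound but is aimed at the wrong target pattern. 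To repair the argument you would need to analyze how $G'$ can create occurrences of the shared patterns, not just of $12\cdots(k+1)$.
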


Note that this also gives a finite set $S$ with the same conditions for $M_{k,j,j+1}$ (with $1 \leq j \leq k$) by reverse complement.

\begin{proof}

We already have an injection $H: \av(M_{k,j,j-1}) \hookrightarrow \av(M_{k,j,j})$. If we can define a set of permutations $S$ such that the image of $H$ is exactly $\av(S)$, then we have our desired result.

We prove the existence of this set by considering how $H$ acts on permutations. Recall that given $p \in \av(M_{k,j,j-1})$, our map reverses the adjacent decreasing subsequences of $p$ that can act as $j-1$ in a $12\dots k$ pattern, and all elements in a given subsequence can only act as a $j-1$ with a single element acting as a $j$.

Now let $p \in \av(M_{k,j,j})$. The map we would like to use would take all elements that can act as a $j-1$ and reverse them. These elements all form adjacent increasing subsequences, and each subsequence has a unique element that can act as a $j$ but not a $j-1$ that they can act with. However, as discussed above, this does not work unless $j=2$. Why not?

If the permutation $p$ contains a pattern in $M_{j-2,j-1,j-1} \oplus 12\dots (k+2-j)$, and the $(j-1)^{\text{st}}$ element of that pattern is of rank at least $(j-1)$, then the image under $H^{-1}$ will contain a forbidden pattern. So our set $S$ will be the minimal set of permutations fulfilling those conditions, together with $M_{k,j,j}$.

\begin{example}
Consider $M_{k,3,2}$. Then our set $S$ will be $M_{k,3,3}$, and the minimal set of permutations containing a pattern in $M_{1,2,2} \oplus 12\dots (k-1)$, where the 2nd element of the pattern acts as a 2 in a $12\dots k$ pattern. This minimal set is exactly $3124\dots (k+1)(k+2)$. Here $32 \sim 21 \in M_{1,2,2}$, and thus $324\dots(k+1)(k+2) \sim 213\dots k(k+1)$.
\end{example}

To show this set gives us a Wilf-Equivalence, we need to show that our map $H$ goes into this set, and that this set it a bijection. We already know it is an injection, so we just need to show it is surjective.

First, we prove that $H$ does indeed map into our desired set. Consider $p \in \av(M_{k,j,j-1})$, and $H(p)$. Assume $H(p)$ contains a pattern described above. Let $a$ be the $(j-1)^{\text{st}}$ element of that pattern. Since by definition $a$ can act as a $(j-1)$ in a $12\dots k$ pattern, and has an increasing subsequence of length $k+2-j$ following it, it must be able to act as a $(j-1)$ in a $12\dots k(k+1)$ pattern. Since this pattern was forbidden in $\av(M_{k,j,j-1})$ this means that at least the first element following $a$, defined as $b$, in the pattern must have been before $a$ in $p$, and thus acts as a $(j-1)$ in $p$.

Then consider the part of the pattern that is in $M_{j-2,j-1,j-1}$. This pattern must have existed in $p$. So then the first $j-2$ elements of that pattern, followed by $b$, then $a$, then the remainder of a length $k$ monotone pattern with $b$ was present in $p$. But this is a forbidden pattern, since $a$ is smaller than at least the $(j-2)$ in the first part of the pattern. Thus $H(p) \in \av(S)$.

Now consider $p \in \av(S)$. We show that the image under the inverse map is in $\av(M_{k,j,j-1})$. The inverse map would be to take all elements that can act as a $j-1$, and reverse the adjacent subsequences of them.

If our result is not in $\av(M_{k,j,j-1})$, then we have a forbidden pattern. Ignoring the $(j-1)^{\text{st}}$ element of the pattern, we get a pattern in $M_{j-2,j-1,j-1} \oplus (k+1-j)$. But then since the $(j-1)^{\text{st}}$ element must have been after the $j^{\text{th}}$ element in $p$, we would have a forbidden pattern in $p$, and so we have a contradiction.

\end{proof}

It should be noted that these finite sets $S$ seem to grow in complexity as $j$ grows large. For example, $M_{k,2,1} = M_{k,2,2}$ exactly, and we have the following pair of results for $j=3,4$:

\begin{corollary}

Let $k \geq 2$, then
\begin{align*}
	\av(M_{k,3,2}) = \av(M_{k,3,3},312\dots(k+1)(k+2)).
\end{align*}
\end{corollary}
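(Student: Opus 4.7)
The plan is to invoke Theorem 3.3 at $j=3$, which already gives $\av(M_{k,3,2}) = \av(S)$ for some finite $S \supseteq M_{k,3,3}$, and then to identify the extra part of $S$ explicitly; the example in the preceding proof sketches this identification, but I would verify it carefully. By the construction in the proof of Theorem 3.3, the permutations added to $M_{k,3,3}$ are the minimal permutations that contain a pattern of $M_{1,2,2} \oplus 12\dots(k-1)$ whose second element acts as a $2$ in some $12\dots k$ pattern of the containing permutation.

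The first step is to unpack $M_{1,2,2}$ from the definitions: the length-$1$ distant pattern $1\square$ has the two length-$2$ completions $\{12,21\}$, and removing the one carrying $2$ in position $2$ leaves $M_{1,2,2}=\{21\}$. Hence $M_{1,2,2}\oplus 12\dots(k-1)$ is the single permutation $213\dots k(k+1)$, so every candidate witness must embed this pattern. In the embedded copy, the second element is the minimum and is already followed by the $k-1$ larger entries $3,4,\dots,k+1$ in increasing order, which exceeds the $k-2$ larger trailing entries needed to make it act as a $2$ in a $12\dots k$ pattern. The only thing still missing is a single extra entry strictly smaller than the embedded $1$, placed to its left; minimal witnesses therefore have length exactly $k+2$.

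Two such candidates arise, according to where the new smaller entry is inserted: before the leading $2$ of the copy, giving $1324\dots(k+1)(k+2)$ after renormalisation, or between the $2$ and the $1$, giving $3124\dots(k+1)(k+2)$. I would eliminate the first candidate by observing that deleting its last entry yields the length-$(k+1)$ pattern $1324\dots(k+1) \in M_{k,3,3}$, so $1324\dots(k+1)(k+2)$ is already forbidden by $M_{k,3,3}$ and need not be added to $S$. The second candidate, in contrast, is genuinely new: its length-$(k+1)$ sub-patterns reduce to only $12\dots(k+1)$, $213\dots k(k+1)$, and $3124\dots(k+1)$, and none of these lies in $M_{k,3,3}$ — the monotone is the pattern omitted from $M_{k,3,3}$, while the other two have a descent at positions $1,2$ and so are not of the form $12\square 3\dots k$ at all.

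A short minimality argument then closes the proof: any witness of length greater than $k+2$ must contain one of the two length-$(k+2)$ candidates above and so is already forbidden either by $M_{k,3,3}$ (via $1324\dots(k+1)$) or by $3124\dots(k+1)(k+2)$. Therefore $S = M_{k,3,3} \cup \{3124\dots(k+1)(k+2)\}$, giving the stated equality. The main obstacle is the sub-pattern bookkeeping in the third paragraph — verifying that $1324\dots(k+1)(k+2)$ is already covered by $M_{k,3,3}$ while $3124\dots(k+1)(k+2)$ is not, and that no other permutation slips past both.
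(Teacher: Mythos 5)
Your proposal is correct and follows the same route as the paper: both derive the corollary by specializing Theorem 3.3 to $j=3$, computing $M_{1,2,2}\oplus 12\dots(k-1)=\{213\dots k(k+1)\}$, and identifying the minimal permutations satisfying the rank condition as the single extra pattern $3124\dots(k+1)(k+2)$. Your write-up is in fact more careful than the paper's (which simply points to the example inside the proof of Theorem 3.3), since you also track the second candidate witness $1324\dots(k+1)(k+2)$ and verify that it is already subsumed by $M_{k,3,3}$ and hence need not be added to $S$.
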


\begin{corollary}

Let $k \geq 3$, then
\begin{align*}
	\av(M_{k,4,3}) = \av\Big(M_{k,4,4},(1423,45123,35124) \oplus (12\dots (k-2))\Big).
\end{align*}
\end{corollary}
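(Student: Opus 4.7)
The plan is to apply Theorem 3.3 with $j=4$, which gives $\av(M_{k,4,3}) = \av(S)$ for $S = M_{k,4,4} \cup \Omega$, where $\Omega$ consists of the minimal permutations $p \in \av(M_{k,4,4})$ that contain a copy of some pattern in $M_{2,3,3} \oplus 12\dots(k-2)$ whose third entry acts as a $3$ in some $12\dots k$ occurrence inside $p$. Since $M_{2,3,3} = \{132, 231\}$ and the trailing block $12\dots(k-2)$ automatically furnishes the required $k-3$ increasing entries following the third entry, the question reduces to enumerating the minimal ways of placing two strictly smaller entries, in increasing positional order, before that third entry, while still remaining in $\av(M_{k,4,4})$.

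I would split the classification according to the type of the embedded copy and how many of the two smaller predecessors are newly inserted versus reused entries of the copy. In a $132$ copy $(a,b,c)$ with $a < c < b$, the entry $a$ already serves as one smaller predecessor, so only one new entry $x < c$ need be inserted before $c$. Iterating over the position of $x$ relative to $(a,b)$ and its value relative to $a$ produces four candidate patterns $1243, 2143, 2413, 1423$, each direct-summed with $12\dots(k-2)$. The first contains the rank-$3$ element of $M_{k,4,4}$ as a subpattern; in the middle two the two smaller entries appear before $c$ in the wrong positional order, so the third entry does not actually act as a $3$; only $1423 \oplus 12\dots(k-2)$ is a genuine minimal obstruction.

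In a $231$ copy $(a,b,c)$ with $c < a < b$, no entry of the copy precedes $c$ with smaller value, forcing two new insertions $x_1 < x_2 < c$ before $c$. Cycling through the six positional orderings of $(x_1, x_2)$ relative to $(a, b)$ gives the candidates $12453, 14253, 14523, 41253, 41523, 45123$, each $\oplus 12\dots(k-2)$. Four of these contain either the rank-$2$ or the rank-$3$ pattern of $M_{k,4,4}$; $41523 \oplus 12\dots(k-2)$ is not minimal because deleting its leading $4$ produces $1423 \oplus 12\dots(k-2)$; only $45123 \oplus 12\dots(k-2)$ survives. An analogous enumeration for the $132$ copy with both smaller predecessors newly inserted (so that $a$ is not used in the predecessor pair) yields exactly one additional surviving candidate, $35124 \oplus 12\dots(k-2)$, arising from $x_1 < x_2 < a$ with $(x_1, x_2)$ both positioned between $b$ and $c$; every other placement contains either an $M_{k,4,4}$ pattern or $1423 \oplus 12\dots(k-2)$ as a proper subpattern.

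The proof is completed by verifying that each of the three surviving patterns genuinely satisfies the "third entry acts as a $3$" condition (which is automatic from the construction) and that the three are mutually incomparable under pattern containment, so that none is redundant in $S$. Together with Theorem 3.3 this identifies $S = M_{k,4,4} \cup \{1423, 45123, 35124\} \oplus 12\dots(k-2)$, and the corollary follows. The main obstacle is the completeness of the positional case analysis; since at most two elements are inserted in a minimal obstruction the enumeration is finite, but some care is needed to check each configuration against the $M_{k,4,4}$-avoidance condition without double-counting.
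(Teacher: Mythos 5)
Your proposal is correct and follows the same route as the paper: Corollary 3.6 is stated there as a direct consequence of Theorem 3.3 with $j=4$, the extra patterns being the minimal permutations containing a copy of $M_{2,3,3}\oplus 12\dots(k-2)=\{132,231\}\oplus 12\dots(k-2)$ whose third entry acts as a $3$. Your explicit case analysis of where the two smaller predecessors sit (which the paper omits entirely) correctly isolates $1423$, $45123$, and $35124$ and discards the rest for the right reasons.
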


Both statements are direct results of Theorem 3.3, with the additional patterns being as defined in the proof.

While both have upper bound on length of patterns at $k+j$, it is unclear if this continues past $j=4$. Further research would be needed to determine an exact formula for the makeup of $S$ for arbitrary $j$.

\section{Further Questions}

It should be obvious that $L(M_{k,j,j\pm1}) \in [(k-1)^{2},(k-1)^{2}+1]$, but as of now there is no clear conclusion on where exactly they lie, or if any general answer can be found at all. There are cases at both extremes, as it was proven in \cite{bona} $L(M_{k,k+1,k}) = (k-1)^{2}$, and a consequence of Theorem 3.1 is $L(M_{k,2,1}) = (k-1)^{2}+1$.

Perhaps the most reasonable assumption would be that as $j$ approaches $k$, $L(M_{k,j,j-1})$ approaches $(k-1)^{2}$, but a proof of that fact is not clear as of yet. Alternatively, it could be the case that the growth rate is $(k-1)^{2}$ for all $j \neq 2$.

Beyond that, there is the question of growth rates for $M_{k,j,i}$ where $i \neq j,j\pm 1$. Outside of the case where $j = k+1$ covered in the introduction, there are no known results. There are trivial lower and upper bounds of $(k-1)^{2}$ and $k^{2}$. The former is obtained by $\av(12\dots k) \subseteq \av(M_{k,j,i})$, and the latter by observing $12\dots (k+1)$ is in $M_{k,j,i}$ whenever $j \neq i$, and when $j=i$ we know the growth rate. We cannot place an upper bound of $(k-1)^{2}+1$ on all almost distant monotone patterns, as we know $L(M_{3,3,1}) = (1+\sqrt{\phi})^{2} \approx 5.16$ \cite{callan,vatter}. Any better bounds would be interesting.

Finally there is also the question of almost distant patterns for non-monotone underlying patterns. Numerical evidence suggests several interesting equivalences between $M_{k,j,j}$ and other sets. Of particular interest, it appears $\av(M_{k,j,j}) = \av(14\sqbox{4}32)$, despite the fact that $\av(14\square 32)$ is not Wilf-Equivalent to any other distant patterns. In addition, there seems to be a Wilf-Equivalences between 6 almost distant patterns for 1342 and 1423, unrelated to the ones covered here.

\pagebreak

\end{document}